\documentclass[11pt]{amsart}

\usepackage[english]{babel}
\usepackage{amsmath}
\usepackage{amssymb}
\usepackage{amsthm}
\usepackage{latexsym}
\usepackage{mathtools}
\usepackage{thmtools}
\usepackage{amsfonts}
\usepackage{mathrsfs}
\usepackage{textcomp}
\usepackage[demo]{graphicx}
\usepackage{setspace}
\usepackage{nicefrac}
\usepackage{indentfirst}
\usepackage{enumerate}
\usepackage{wasysym}
\usepackage{upgreek}
\usepackage[pdfpagelabels,hyperindex=false]{hyperref}
\usepackage{paralist}
\usepackage{xcolor}
\hypersetup{
    colorlinks,
    linkcolor={red!50!black},
    citecolor={green!50!black},
    urlcolor={blue!80!black}
}%
\usepackage{etoolbox}
\usepackage{mathdots}
\usepackage{wrapfig}
\usepackage{floatflt}
\usepackage{tensor} 
\usepackage{parskip}
\usepackage{lscape}
\usepackage[enableskew]{youngtab}
\usepackage{pifont}

\usepackage{tikz}
\usetikzlibrary{arrows}
\usetikzlibrary{matrix}
\usetikzlibrary{positioning}
\usetikzlibrary{calc}

\usepackage[all,cmtip]{xy}
\usepackage[labelfont=bf, font={small}]{caption}[2005/07/16]

\usepackage{xcolor}
\definecolor{red}{rgb}{1,0,0}

\newcommand{\vvirg}{ , \dots , }
\newcommand{\ootimes}{ \otimes \cdots \otimes }

\newcommand{\calM}{\mathcal{M}}

\newcommand{\bbS}{\mathbb{S}}

\renewcommand{\phi}{\varphi}

\renewcommand{\theta}{\vartheta}

\renewcommand{\tilde}[1]{\widetilde{#1}}

\DeclareMathOperator{\Tr}{Tr}

\DeclareMathOperator{\End}{End}

\DeclareMathOperator{\sgn}{sgn}

\newcommand{\Mat}{\mathrm{Mat}}

\newcommand{\fillwidthof}[3][c]%
	{%
		\parbox
		{%
			\widthof{#2}%
		}%
		{%
			\ifx#1c%
				\centering#3%
			\else\ifx#1l%
				#3\hfill%
			\else\ifx#1r%
				\hfill#3%
			\fi\fi\fi%
		}%
	}%

\def\mylettrine#1#2 {\lettrine{#1}{#2}\space}

\newcommand{\partinto}[1][]{\smash{\mathord{\mathchoice{%
  \xymatrix@=0.4em@1{%
  \ar@{|-}[rr]_-*--{\scriptstyle #1}
  &*{\phantom{\scriptstyle{#1}}}&}
}{
  \xymatrix@=0.25em@1{%
  \ar@{|-}[rr]_-*--{\scriptstyle #1}
  &*{\phantom{\scriptstyle{#1}}}&}
}{
  \xymatrix@=0.2em@1{%
  \ar@{|-}[rr]_-*--{\scriptscriptstyle #1}
  &*{\phantom{\scriptscriptstyle{#1}}}&}
}{}}}}
\newcommand{\partintonosmash}[1][]{\mathord{\mathchoice{%
  \xymatrix@=0.4em@1{%
  \ar@{|-}[rr]_-*--{\scriptstyle #1}
  &*{\phantom{\scriptstyle{#1}}}&}
}{
  \xymatrix@=0.25em@1{%
  \ar@{|-}[rr]_-*--{\scriptstyle #1}
  &*{\phantom{\scriptstyle{#1}}}&}
}{
  \xymatrix@=0.2em@1{%
  \ar@{|-}[rr]_-*--{\scriptscriptstyle #1}
  &*{\phantom{\scriptscriptstyle{#1}}}&}
}{}}}
\newcommand{\partintostar}[1][]{\smash{\mathord{\mathchoice{%
  \xymatrix@=0.4em@1{%
  \ar@{|-}[rr]_-*--{\scriptstyle #1}^-*--{\scriptstyle \ast}
  &*{\phantom{\scriptstyle{#1}}}&}
}{
  \xymatrix@=0.25em@1{%
  \ar@{|-}[rr]_-*--{\scriptstyle #1}^-*--{\scriptstyle \ast}
  &*{\phantom{\scriptstyle{#1}}}&}
}{
  \xymatrix@=0.2em@1{%
  \ar@{|-}[rr]_-*--{\scriptscriptstyle #1}^-*--{\scriptstyle \ast}
  &*{\phantom{\scriptscriptstyle{#1}}}&}
}{}}}}
\newcommand{\partintostarnosmash}[1][]{\mathord{\mathchoice{%
  \xymatrix@=0.4em@1{%
  \ar@{|-}[rr]_-*--{\scriptstyle #1}^-*--{\scriptstyle \ast}
  &*{\phantom{\scriptstyle{#1}}}&}
}{
  \xymatrix@=0.25em@1{%
  \ar@{|-}[rr]_-*--{\scriptstyle #1}^-*--{\scriptstyle \ast}
  &*{\phantom{\scriptstyle{#1}}}&}
}{
  \xymatrix@=0.2em@1{%
  \ar@{|-}[rr]_-*--{\scriptscriptstyle #1}^-*--{\scriptstyle \ast}
  &*{\phantom{\scriptscriptstyle{#1}}}&}
}{}}}

\usepackage{bbold}
\usepackage[top=3cm,bottom=3cm,left=3cm,right=3cm]{geometry}
\usepackage{mathtools}
\usepackage{booktabs}
\usepackage{braket}
\usepackage{amssymb}
\usepackage{cleveref}

\DeclareMathOperator{\uMPS}{uMPS}
\DeclareMathOperator{\hMPS}{hMPS}

\DeclareMathOperator{\Cyc}{Cyc}
\DeclareMathOperator{\cyc}{cyc}

\def\CC{\mathbb{C}}
\def\NN{\mathbb{N}}

\def\SG{\mathfrak{S}}
\def\ot{\otimes}

\newtheorem{lemma}{Lemma}[section]

\newtheorem{claim}[lemma]{Claim}

\newtheorem{theorem}[lemma]{Theorem}
\newtheorem{corollary}[lemma]{Corollary}

\newtheorem{problem}[lemma]{Problem}
\newtheorem{observation}[lemma]{Observation}

\theoremstyle{definition}
\newtheorem{definition}[lemma]{Definition}
\newtheorem{remark}[lemma]{Remark}
\newtheorem{example}[lemma]{Example}
\title{Matrix product states, geometry, and invariant theory}
\author{Tim Seynnaeve}

\begin{document}
	\begin{abstract}	
		Matrix product states play an important role in quantum information theory to represent states of many-body systems. They can be seen as low-dimensional subvarieties of a high-dimensional tensor space. In these notes, we consider two variants: homogeneous matrix product states and uniform matrix product states. Studying the linear spans of these varieties leads to a natural connection with invariant theory of matrices. For homogeneous matrix product states, a classical result on polynomial identities of matrices leads to a formula for the dimension of the linear span, in the case of 2x2 matrices. %
	\end{abstract}
\maketitle

These notes are based partially on a talk given by the author at the University of Warsaw during the thematic semester ``AGATES: Algebraic Geometry with Applications to TEnsors and Secants", and partially on further research done during the semester. The author would like to thank Jarosław Buczyński, Weronika Buczyńska, Francesco Galuppi, and Joachim Jelisiejew for organizing this program, and Fulvio Gesmundo, Fatemeh Mohammadi, Cordian Riener, and Yang Qi for helpful discussions. This is still a preliminary version; an updated version will be uploaded over the course of 2023. Feel free contact me regarding any questions, comments, typos or errors. The final part contains ongoing research; if you are interested in working on the topic please contact me first.

\section{Invariant theory of matrices} \label{sec:InvThy}
In this section we give an introduction to the invariant theory of matrices. The exposition is based on the first chapters of \cite{DeConciniProcesi2017}. We work over the field of complex numbers.
\subsection{The invariant ring of a group action}
Let $V$ be a finite-dimensional representation of a group $G$. Then the ring 
\[
\CC[V] = S^{\bullet}(V^*)
\]
of polynomial functions on $V$ comes with an action of $G$, given by
\[
(g \cdot f)(v) = f(g^{-1}\cdot v)
\]
for $f \in \CC[V]$, $g \in G$, and $v \in V$. The \emph{invariant ring} is defined as 
\[
\CC[V]^G = \{f \in \CC[V] \mid g\cdot f = f \quad \forall g \in G\}.
\]
A central question in invariant theory is to, for a given $G$ and $V$, describe this ring, i.e.\
\begin{enumerate}
	\item Find generators for $\CC[V]^G$,
	\item Find the (polynomial) relations between these generators.
\end{enumerate}
\begin{example}
	Let $\SG_d$ be the symmetric group of order $d!$, acting on $V=\CC^d$ by permuting the coordinates. The invariant ring is the ring
	\[
	\CC[x_1,\ldots,x_d]^{\SG_d}
	\]
	of symmetric polynomials. It is known that this ring is generated by the elementary symmetric polynomials $e_1,\ldots,e_d$, where
	\[
	e_k := \sum_{i_1 < \ldots < i_k}{x_{i_1}\cdots x_{i_k}}.
	\]
	Furthermore, the $e_i$ are algebraically independent. In other words, the map \begin{align*}
	\CC[y_1,\ldots,y_d] &\xrightarrow{\cong} \CC[x_1,\ldots,x_d]^{\SG_d} \\
	y_i & \mapsto e_i
	\end{align*}
	is an isomorphism of $\CC$-algebras.
\end{example}

\subsection{The invariant ring of matrices} 
For the rest of this section, we will be concerned with one particular group action. We fix natural numbers $m$ and $n$, and take $G=GL_m(\CC)$ and $V=\Mat(m \times m, \CC)^n$ the space of $n$-tuples of $m \times m$ matrices. The action of $G$ on $V$ is given by simultaneous conjugation:
\[
g \cdot (A_1,\ldots,A_n) = (gA_1g^{-1}, \ldots, gA_ng^{-1}).
\]
We will write 
\[
\CC[V] = \CC[x^k_{ij} \mid 1\leq i,j \leq m; 1 \leq k \leq n] = \CC[X_1,\ldots,X_n],
\]
where $X_k = (x^k_{ij})_{ij}$ is an $m \times m$ matrix with generic entries. I.e.\ when we write $f(X_1,\ldots,X_n) \in \CC[X_1,\ldots,X_n]$, this means a polynomial in $nm^2$ variables. The invariant ring is given by
\[
\CC[X_1,\ldots,X_n]^{GL_m} = \{f \mid f(gX_1g^{-1}, \ldots, gX_ng^{-1}) = f(X_1,\ldots,X_n) \quad \forall g \in GL_m\}.
\]
Here the notation $f(gX_1g^{-1}, \ldots, gX_ng^{-1})$ means that we substitute the variable $x^k_{ij}$ with the $ij$-th entry of the matrix $gX_kg^{-1}$.

Note that $\CC[X_1,\ldots,X_n]$ admits an $\NN^n$ grading, by putting $\deg(x^k_{ij})$ equal to the $k$-th basis vector $e_k = (0,\ldots,1,\ldots,0) \in \NN^n$. It is not hard to see that a polynomial is invariant if and only if its graded parts are invariant; hence the invariant ring $\CC[X_1,\ldots,X_n]^{GL_m}$ inherits the aforementioned grading. 
The trick to describing this invariant ring is to start with its multilinear part
\[
\CC[X_1,\ldots,X_n]^{GL_m}_{(1,\ldots,1)}.
\]

\subsection{The multilinear part} \label{sec:multilinearPart}
The key ingredient is Schur-Weyl duality, a fundamental result in representation theory. We briefly state it here, and refer the reader to e.g.\ \cite[Chapter 6]{FultonHarris} for a more in-depth discussion.
\begin{theorem}
	Consider the vector space $(\CC^m)^{\ot n}$, equipped with the following actions of the general linear group $GL_m$ and the symmetric group $\SG_n$:
	\begin{align} \label{eq:leftGLaction}
		g \cdot (v_1 \ot \cdots \ot v_n) &= (g\cdot v_1) \ot \cdots \ot (g\cdot v_n), \\
		\sigma \cdot (v_1 \ot \cdots \ot v_n) &= v_{\sigma^{-1}(1)} \ot \cdots \ot v_{\sigma^{-1}(n)}. \label{eq:rightSGaction}
	\end{align}
	These actions commute, meaning that we get a ring homomorphism
	\begin{align} \label{eq:groupAlgebraToEnd}
		\CC[\SG_n] \to\End_{GL_m}((\CC^m)^{\ot n}).
	\end{align} 
	This homomorphism is surjective. If $n \leq m$, it is an isomorphism. If $n > m$, its kernel is given by the two-sided ideal generated by the antisymmetrizer
	\[
	c_{m+1} := \sum_{\sigma \in \SG_{m+1} \subseteq \SG_n} \sgn(\sigma)\cdot\sigma.
	\]
	We will denote this ideal by $I_{m,n}$.
\end{theorem}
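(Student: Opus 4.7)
The plan is to verify the map is a well-defined algebra homomorphism, prove surjectivity via the first fundamental theorem of invariant theory for $GL_m$, check injectivity when $n\leq m$ by a direct evaluation, and identify the kernel when $n>m$ by comparing dimensions of semisimple quotients.

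Commutativity of the actions is immediate on pure tensors: both $g\cdot(\sigma\cdot(v_1 \ot \cdots \ot v_n))$ and $\sigma\cdot(g\cdot(v_1 \ot \cdots \ot v_n))$ equal $(gv_{\sigma^{-1}(1)}) \ot \cdots \ot (gv_{\sigma^{-1}(n)})$, and this yields a $\CC$-algebra homomorphism landing in $\End_{GL_m}((\CC^m)^{\ot n})$. For surjectivity I would pass to the adjoint description
\[
\End_{GL_m}((\CC^m)^{\ot n}) \;\cong\; \bigl((\CC^m)^{\ot n} \ot ((\CC^m)^*)^{\ot n}\bigr)^{GL_m},
\]
realising each invariant endomorphism as a $GL_m$-invariant multilinear form in $n$ vectors and $n$ covectors. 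The first fundamental theorem for $GL_m$ says such invariants are polynomials in the pairings $\langle \xi_j, v_i\rangle$; the multilinearity constraint forces each variable to appear exactly once, leaving the $n!$ products $\prod_i \langle \xi_i, v_{\sigma(i)}\rangle$ indexed by $\sigma \in \SG_n$, which correspond under the adjunction to the permutation operators.

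For the case $n \leq m$, I would evaluate $\sum_\sigma a_\sigma \sigma$ on $e_1 \ot \cdots \ot e_n$ with $n$ distinct basis vectors; the result is a linear combination of $n!$ distinct standard basis tensors of $(\CC^m)^{\ot n}$, so it is nonzero unless all $a_\sigma = 0$. When $n > m$, the antisymmetrizer $c_{m+1}$ annihilates every pure tensor $v_1 \ot \cdots \ot v_n$ by a pigeonhole argument (on expanding in a basis, any $m+1$ slots must contain a repeated basis vector, making the total antisymmetrization vanish); hence the entire two-sided ideal $I_{m,n}$ lies in the kernel.

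The main obstacle is the reverse inclusion $\ker \subseteq I_{m,n}$; I would prove it by a dimension count using the Wedderburn decomposition $\CC[\SG_n] \cong \bigoplus_{\lambda \vdash n} \End(V_\lambda)$. A standard Young-symmetrizer calculation shows that $c_{m+1}$ acts nontrivially on the Specht module $V_\lambda$ precisely when $\lambda$ has a column of length $\geq m+1$, i.e.\ $\ell(\lambda) > m$, giving
\[
\CC[\SG_n] / I_{m,n} \;\cong\; \bigoplus_{\ell(\lambda) \leq m} \End(V_\lambda), \qquad \dim(\CC[\SG_n]/I_{m,n}) = \sum_{\ell(\lambda) \leq m} (\dim V_\lambda)^2.
\]
A matching lower bound $\dim \End_{GL_m}((\CC^m)^{\ot n}) \geq \sum_{\ell(\lambda) \leq m} (\dim V_\lambda)^2$ is obtained by applying Young symmetrizers $y_\lambda \in \CC[\SG_n]$ to $(\CC^m)^{\ot n}$: for each $\lambda$ with $\ell(\lambda) \leq m$, the image $(\CC^m)^{\ot n} \cdot y_\lambda$ is a nonzero Schur module $S_\lambda \CC^m$, and varying over standard Young tableaux of shape $\lambda$ produces $\dim V_\lambda$ independent copies, contributing $(\dim V_\lambda)^2$ to $\dim \End_{GL_m}$. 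The two bounds combine to force the surjection $\CC[\SG_n]/I_{m,n} \twoheadrightarrow \End_{GL_m}((\CC^m)^{\ot n})$ to be an isomorphism, completing the proof.
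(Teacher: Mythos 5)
The paper offers no proof of this statement: it is quoted as classical Schur--Weyl duality with a pointer to Fulton--Harris, so there is nothing internal to compare against, and your task was effectively to supply a proof of a textbook theorem. Your route is correct and standard. The commutativity check, the injectivity argument for $n\leq m$ via evaluation on $e_1\ot\cdots\ot e_n$, and the vanishing of $c_{m+1}$ on $(\CC^m)^{\ot n}$ (hence $I_{m,n}\subseteq\ker$, since the kernel is a two-sided ideal) are all exactly right; and identifying the kernel by squeezing $\End_{GL_m}((\CC^m)^{\ot n})$ between the upper bound $\dim\bigl(\CC[\SG_n]/I_{m,n}\bigr)=\sum_{\ell(\lambda)\leq m}(\dim V_\lambda)^2$ coming from surjectivity and a matching lower bound is the standard dimension count --- note that your block description of $I_{m,n}$ as $\bigoplus_{\ell(\lambda)>m}\End(V_\lambda)$ is precisely the paper's Remark~\ref{rmk:Imn}. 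Two points deserve care in a full write-up. First, deducing surjectivity from the first fundamental theorem is legitimate but can be circular: several standard references prove the FFT \emph{from} Schur--Weyl duality. Either cite a proof of the FFT independent of Schur--Weyl (e.g.\ via the Capelli identity), or replace this step by the double centralizer theorem applied to the span of $\{g^{\ot n}: g\in GL_m\}$, which equals the symmetric tensors in $\End(\CC^m)^{\ot n}$ and hence has commutant equal to the image of $\CC[\SG_n]$. Second, your lower bound needs the copies of $\bbS^{\lambda}(\CC^m)$ attached to distinct standard tableaux to be linearly independent; the cleanest fix is the character identity $(x_1+\cdots+x_m)^n=\sum_{\ell(\lambda)\leq m}(\dim V_\lambda)\, s_\lambda(x_1,\ldots,x_m)$, which pins the multiplicity of $\bbS^{\lambda}(\CC^m)$ in $(\CC^m)^{\ot n}$ to exactly $\dim V_\lambda$. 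Likewise, the claim that $c_{m+1}$ acts nontrivially on $V_\lambda$ exactly when $\ell(\lambda)>m$ is most easily justified by branching: $c_{m+1}$ is proportional to the projector onto the sign-isotypic part for $\SG_{m+1}\subseteq\SG_n$, and the restriction of $V_\lambda$ to $\SG_{m+1}$ contains the sign representation iff $\lambda$ contains the column $(1^{m+1})$. With these details filled in, the proof is complete.
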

The theorem below describes the multilinear part of $\CC[X_1,\ldots,X_n]^{GL_m}$ as a quotient of the symmetric group algebra $\CC[\SG_n]$. We will write elements in $\SG_n$ by cycle notation.
\begin{theorem} \label{thm:multilinearPart}
	There is a surjective linear map
	\begin{align*}
	\CC[\SG_n] &\twoheadrightarrow \CC[X_1,\ldots,X_n]^{GL_m}_{(1,\ldots,1)}\\
	(i_1\ldots i_k) \cdots (j_1 \ldots j_\ell) &\mapsto \Tr(X_{i_1}\cdots X_{i_k}) \cdots \Tr(X_{j_1}\cdots X_{j_\ell}).
	\end{align*}
	If $n \leq m$, it is an isomorphism. If $n > m$, its kernel is given by the two-sided ideal $I_{m,n}$ described above.
\end{theorem}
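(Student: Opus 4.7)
The plan is to reduce the theorem to Schur-Weyl duality by identifying the multilinear part of the invariant ring with $\End_{GL_m}((\CC^m)^{\ot n})$. All of the representation-theoretic content then comes from the theorem already quoted; the work lies in tracking the identifications so that the final map has the claimed trace form.

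First, I would identify $\CC[V]_{(1,\ldots,1)}$ with $\End(\CC^m)^{\ot n}$ as $GL_m$-representations (diagonal conjugation). Since $V = \End(\CC^m)^{\oplus n}$, the multilinear piece is canonically $(\End(\CC^m)^*)^{\ot n}$, and the trace pairing $\langle A,B\rangle = \Tr(AB)$ is a non-degenerate $GL_m$-invariant bilinear form on $\End(\CC^m)$; this supplies a $GL_m$-equivariant isomorphism $\End(\CC^m)^* \cong \End(\CC^m)$. Next, using $\End(\CC^m) = \CC^m \ot (\CC^m)^*$ and shuffling tensor factors gives a $GL_m$-equivariant identification
$$\End(\CC^m)^{\ot n} \;\cong\; (\CC^m)^{\ot n} \ot ((\CC^m)^*)^{\ot n} \;=\; \End\big((\CC^m)^{\ot n}\big).$$
Taking $GL_m$-invariants on both sides yields $\CC[V]^{GL_m}_{(1,\ldots,1)} \cong \End_{GL_m}((\CC^m)^{\ot n})$.

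Now I would apply Schur-Weyl duality exactly as stated: the $\SG_n$-action of \eqref{eq:rightSGaction} gives a surjection $\CC[\SG_n] \twoheadrightarrow \End_{GL_m}((\CC^m)^{\ot n})$, which is an isomorphism when $n \leq m$ and otherwise has kernel $I_{m,n}$. Composing with the identifications above yields the surjection in the statement with the correct kernel. It remains to check the explicit formula: tracing through all of the identifications, the image of $\sigma \in \SG_n$ is the multilinear polynomial
$$(A_1,\ldots,A_n) \longmapsto \Tr_{(\CC^m)^{\ot n}}\!\big(\sigma \circ (A_1 \ot \cdots \ot A_n)\big),$$
and a direct index computation using the standard basis shows that for a single $k$-cycle $(i_1\,i_2\,\ldots\,i_k)$ this evaluates to $\Tr(A_{i_1}A_{i_2}\cdots A_{i_k})$. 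Multiplicativity of the trace on $(\CC^m)^{\ot n}$ over the tensor factors associated to disjoint cycles then delivers the product-of-traces formula in the statement.

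The main obstacle is the bookkeeping in this last step, where three independent conventions must be aligned: the direction of the $\SG_n$-action on tensor factors in \eqref{eq:rightSGaction}, the orientation of cycle notation, and the trace pairing used to identify $(\End(\CC^m)^*)^{\ot n}$ with $\End(\CC^m)^{\ot n}$. A mismatch does not affect surjectivity or the kernel, but it does change which permutation produces which trace monomial; once the conventions are coherent, the trace-of-permutation identity follows from a clean index chase, and this is the only place where concrete calculation is unavoidable.
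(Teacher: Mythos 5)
Your proposal is correct and follows essentially the same route as the paper's own proof sketch: identify $\CC[X_1,\ldots,X_n]_{(1,\ldots,1)}$ with $\End\left((\CC^m)^{\ot n}\right)$ as a $GL_m$-representation, take invariants, and invoke Schur--Weyl duality. You simply supply the details the sketch omits, namely the trace-pairing identification $\End(\CC^m)^* \cong \End(\CC^m)$ and the verification that $\sigma$ maps to the product of traces over its cycles.
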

\begin{proof}[Proof sketch]
	Note that
	\[
	\CC[X_1,\ldots,X_n]_{(1,\ldots,1)} = S^{\bullet}\left(\End(\CC^m)^{\oplus n}\right)^*_{(1,\ldots,1)} \cong \left(\End(\CC^m)^{\ot n}\right)^* \cong \End\left((\CC^m)^{\ot n}\right),
	\]
	compatible with the $GL_m$-actions on both sides. But the ring 
	\[
	\End\left((\CC^m)^{\ot n}\right)^{GL_m} = \End_{GL_m}\left((\CC^m)^{\ot n}\right)
	\]
	is described by Schur-Weyl-duality.
\end{proof}
\begin{remark} \label{rmk:Imn}
	Recall the decomposition of $\CC[\SG_n]$ into isotypic components: 
	\[
	\CC[\SG_n] \cong \bigoplus_{\lambda \vdash n}{I_{\lambda}},
	\]
	compatible with both the left and right action of $\SG_n$. Then $I_{m,n}$ is the sum of all $I_{\lambda}$, where $\lambda$ is a Young diagram with at least $m+1$ rows. %
\end{remark}
\subsection{The other graded parts}
Fix $(d_1,\ldots, d_n)$ and write $d=\sum{d_i}$. Our goal is to describe the degree $(d_1,\ldots, d_n)$ part of our invariant ring. The main idea is that we can use \emph{polarization} to reduce this to the multilinear part.

More precisely: consider the ring $\CC[X_1^{(1)}, \ldots, X_1^{(d_1)}, \ldots, X_n^{(1)}, \ldots, X_n^{(d_n)}]$, which is just the ring $\CC[X_1,\ldots,X_{d}]$ where we relabeled the $d=\sum{d_i}$ matrices in a suggestive way. We equip it with the action of the group $\SG_{d_1}\times \cdots \times \SG_{d_n}$, where $\SG_{d_i}$ acts by permuting the (entries of the) matrices $X_i^{(1)}, \ldots, X_i^{(d_i)}$.
\begin{lemma} \label{lemma:polarization}
	We have an isomorphism of rings 
	\begin{align*}
		\CC[X_1^{(1)}, \ldots, X_1^{(d_1)}, \ldots, X_n^{(1)}, \ldots, X_n^{(d_n)}]_{(1,\ldots,1)}^{\SG_{d_1}\times \cdots \times \SG_{d_n}} &\cong \CC[X_1,\ldots,X_n]_{(d_1,\ldots,d_n)} \\
		f(X_1^{(1)},\ldots,X_1^{(d_1)},\ldots,X_n^{(1)},\ldots,X_n^{(d_n)}) &\mapsto f(X_1,\ldots,X_1,\ldots,X_n,\ldots,X_n).
	\end{align*}
	This isomorphism is compatible with the $GL_m$-action.
\end{lemma}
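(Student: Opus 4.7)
The plan is to exhibit an explicit two-sided inverse to the restitution map via polarization. First, one checks that the displayed map $R\colon f \mapsto f(X_1,\ldots,X_1,\ldots,X_n,\ldots,X_n)$ really lands in the target: a multilinear polynomial in the $X_i^{(j)}$ becomes multi-homogeneous of degree $(d_1,\ldots,d_n)$ in the $X_i$, and $GL_m$-equivariance is automatic because the substitution commutes with the (linear) simultaneous conjugation action on matrix entries. So we need only produce an inverse (up to a nonzero scalar) that also respects the $GL_m$-action.

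To invert $R$, I would use the standard polarization operator
\[
P(f)(X_1^{(1)},\ldots,X_n^{(d_n)}) := \text{coefficient of } \prod_{i=1}^n\prod_{j=1}^{d_i} t_i^{(j)} \text{ in } f\left(\sum_{j=1}^{d_1} t_1^{(j)} X_1^{(j)},\ldots,\sum_{j=1}^{d_n} t_n^{(j)} X_n^{(j)}\right),
\]
where the $t_i^{(j)}$ are formal scalar variables. By construction $P(f)$ is multilinear in each $X_i^{(j)}$. It is also invariant under $\SG_{d_1}\times\cdots\times\SG_{d_n}$: permuting the $X_i^{(j)}$ by $\sigma\in\SG_{d_i}$ has the same effect on the bracketed expression as simultaneously permuting the dummies $t_i^{(j)}$, and the coefficient of the symmetric monomial $\prod_j t_i^{(j)}$ is unchanged by the latter. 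The $GL_m$-action passes through the formal substitution, so $P$ is $GL_m$-equivariant as well.

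The remaining step is to verify $R \circ P = d_1!\cdots d_n!\cdot \id$ and $P \circ R = d_1!\cdots d_n!\cdot \id$. For the second, starting from a multilinear, $\SG_{d_1}\times\cdots\times\SG_{d_n}$-invariant $g$, restituting then polarizing expands $g\bigl(\sum_j t_1^{(j)} X_1^{(j)},\ldots\bigr)$ by multilinearity into a sum of monomials in the $t_i^{(j)}$ with coefficients of the form $g(X_1^{(j_{1,1})},\ldots)$; extracting the coefficient of $\prod_{i,j} t_i^{(j)}$ picks up precisely one contribution per tuple $(\sigma_1,\ldots,\sigma_n)\in\SG_{d_1}\times\cdots\times\SG_{d_n}$, and by the symmetry of $g$ all $d_1!\cdots d_n!$ terms coincide. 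The first identity is analogous. Since $d_1!\cdots d_n!\neq 0$ in characteristic zero, the map $R$ has the two-sided inverse $\tfrac{1}{d_1!\cdots d_n!}\, P$, giving the desired $GL_m$-equivariant isomorphism. The proof is essentially bookkeeping; the only subtle point is verifying that $P$ actually lands in the $\SG_{d_1}\times\cdots\times\SG_{d_n}$-invariant subspace, which is the step I would write out most carefully.
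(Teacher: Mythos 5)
Your proof is correct and follows essentially the same route as the paper: the paper also defines the inverse by substituting $X_i \mapsto \sum_j X_i^{(j)}$ and extracting the multilinear part, leaving the verification to the reader. You have simply written out the bookkeeping (the scalar $d_1!\cdots d_n!$ and the $\SG_{d_1}\times\cdots\times\SG_{d_n}$-invariance of the polarization) that the paper omits.
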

\begin{proof}
	The inverse map is given by polarization: given $g(X_1, \ldots, X_n) \in \CC[X_1,\ldots,X_n]_{(d_1,\ldots,d_n)}$, first substitute each $X_i$ for the sum $\sum_{j=1}^{d_i}{X_i^{(j)}}$, then take the multilinear part of the obtained polynomial. We leave it to the reader to check that, up to a nonzero scalar, this does define an inverse to the map above.
\end{proof}

Before we can state the main result of this section, we need some additional notation.
 Let $\SG_d$ be the group of permutations of the set
\[
\{(1,1),\ldots,(1,d_1),\ldots,(n,1),\ldots(n,d_n)\}.
\]
Let us now consider the subalgebra
\[
\CC[\SG_d]^{\SG_{d_1}\times \ldots \times \SG_{d_n}} \subseteq \CC[\SG_d]
\]
where the action of $\SG_{d_1}\times \ldots \times \SG_{d_n} \subset \SG_d$ on $\SG_d$ is given by conjugation. 
We will write elements of $\CC[\SG_d]^{\SG_{d_1}\times \ldots \times \SG_{d_n}}$ as (linear combinations of) ``pseudo-permutations" %
$\left(i_1\ldots i_k\right) \ldots \left(j_1 \ldots j_{\ell}\right)$, where the number $i$ appears $d_i$ times. By definition, $\left(i_1\ldots i_k\right) \ldots \left(j_1 \ldots j_{\ell}\right)$ is the image of any element of the form 
\[
\left((i_1,\cdot)\ldots (i_k,\cdot)\right) \ldots \left((j_1,\cdot) \ldots (j_{\ell},\cdot)\right)
\]
under the symmetrization map.
\begin{align*}
\CC[\SG_d] &\twoheadrightarrow \CC[\SG_d]^{\SG_{d_1}\times \ldots \times \SG_{d_n}} %
\end{align*}

\begin{theorem} \label{thm:otherGradedParts}
	There is a surjective linear map
	\begin{align} \label{eq:TraceRelationMap}
		\begin{split}
	\CC[\SG_d]^{\SG_{d_1}\times \ldots \times \SG_{d_n}} &\twoheadrightarrow \CC[X_1,\ldots,X_n]^{GL_m}_{(d_1,\ldots,d_n)}\\
	\left(i_1\ldots i_k\right) \cdots \left(j_1 \ldots j_{\ell}\right) &\mapsto \Tr(X_{i_1}\cdots X_{i_k}) \cdots \Tr(X_{j_1}\cdots X_{j_\ell}),
	\end{split}
	\end{align}
	whose kernel is given by 
	\[
	I_{m,d} \cap \CC[\SG_d]^{\SG_{d_1}\times \ldots \times \SG_{d_n}}.
	\]
	Here the action of $\SG_{d_1}\times \ldots \times \SG_{d_n} \subset \SG_d$ on $\SG_d$ is given by conjugation.
\end{theorem}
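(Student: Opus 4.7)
The plan is to combine Lemma \ref{lemma:polarization} with Theorem \ref{thm:multilinearPart}, using that taking invariants under a finite group in characteristic zero is an exact functor.

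First, I would apply Lemma \ref{lemma:polarization}. It gives a $GL_m$-equivariant isomorphism between the degree-$(d_1,\ldots,d_n)$ part of $\CC[X_1,\ldots,X_n]$ and the multilinear $\SG_{d_1}\times\cdots\times\SG_{d_n}$-invariant part of $\CC[X_1^{(1)},\ldots,X_n^{(d_n)}]$, which after taking $GL_m$-invariants and using the fact that the $\SG_{d_1}\times\cdots\times\SG_{d_n}$-action on the matrix tuples commutes with the $GL_m$-action, yields
\[
\CC[X_1,\ldots,X_n]^{GL_m}_{(d_1,\ldots,d_n)} \;\cong\; \left(\CC[X_1^{(1)},\ldots,X_n^{(d_n)}]^{GL_m}_{(1,\ldots,1)}\right)^{\SG_{d_1}\times\cdots\times\SG_{d_n}}.
\]

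Next, by Theorem \ref{thm:multilinearPart} applied to the relabeled $d$-tuple of matrices, there is a surjection $\CC[\SG_d] \twoheadrightarrow \CC[X_1^{(1)},\ldots,X_n^{(d_n)}]^{GL_m}_{(1,\ldots,1)}$ with kernel $I_{m,d}$. Here I need to verify that this surjection is equivariant with respect to the natural action of $\SG_{d_1}\times\cdots\times\SG_{d_n}\subseteq \SG_d$, where on the right the subgroup permutes the labelled matrices and on the left it acts by conjugation inside $\CC[\SG_d]$. This is a standard fact: under the Schur--Weyl isomorphism $\CC[\SG_d]\to \End_{GL_m}((\CC^m)^{\otimes d})$, permuting tensor factors corresponds to conjugation of the corresponding element in $\SG_d$, and permuting the matrix variables on the polynomial side corresponds exactly to permuting the tensor factors on the endomorphism side.

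Then I would take $\SG_{d_1}\times\cdots\times\SG_{d_n}$-invariants of the short exact sequence $0\to I_{m,d}\to\CC[\SG_d]\to \CC[\SG_d]/I_{m,d}\to 0$. Since $\SG_{d_1}\times\cdots\times\SG_{d_n}$ is finite and we are in characteristic zero, the Reynolds operator (averaging) provides a splitting, so the functor of invariants is exact. Combined with the previous two steps, this gives the desired surjection from $\CC[\SG_d]^{\SG_{d_1}\times\cdots\times\SG_{d_n}}$ onto $\CC[X_1,\ldots,X_n]^{GL_m}_{(d_1,\ldots,d_n)}$ with kernel $(I_{m,d})^{\SG_{d_1}\times\cdots\times\SG_{d_n}} = I_{m,d}\cap \CC[\SG_d]^{\SG_{d_1}\times\cdots\times\SG_{d_n}}$.

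Finally, I would unwind the composition to check that it sends a pseudo-permutation $(i_1\ldots i_k)\cdots(j_1\ldots j_\ell)$ to the indicated trace monomial. By Theorem \ref{thm:multilinearPart}, any representative permutation of the form $((i_1,a_1)\ldots(i_k,a_k))\cdots$ maps to $\Tr(X_{i_1}^{(a_1)}\cdots X_{i_k}^{(a_k)})\cdots$, and substituting $X_i^{(j)}=X_i$ (the polarization inverse) yields $\Tr(X_{i_1}\cdots X_{i_k})\cdots(X_{j_1}\cdots X_{j_\ell})$, independently of the choices $a_*$. The main obstacle I anticipate is nothing conceptually deep but rather bookkeeping: keeping track of the three different group actions (the two $\SG_d$-actions coming from Schur--Weyl duality, and the diagonal conjugation action of the subgroup $\SG_{d_1}\times\cdots\times\SG_{d_n}$) and checking their compatibility precisely enough to justify the exactness argument.
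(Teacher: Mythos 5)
Your proposal is correct and follows essentially the same route as the paper's own (much terser) proof sketch: combine Lemma \ref{lemma:polarization} with Theorem \ref{thm:multilinearPart}, observing that the surjection $\CC[\SG_d] \twoheadrightarrow \CC[X_1^{(1)}, \ldots, X_n^{(d_n)}]^{GL_m}_{(1,\ldots,1)}$ intertwines conjugation by $\SG_{d_1}\times\cdots\times\SG_{d_n}$ with permutation of the relabeled matrices. Your added details --- exactness of taking invariants under a finite group in characteristic zero, and the explicit unwinding on pseudo-permutations --- are correct and fill in precisely what the paper leaves implicit.
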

\begin{proof}[Proof sketch]
	Combine \Cref{lemma:polarization} and \Cref{thm:multilinearPart}, noting that in the map
	\[
	\CC[\SG_d] \twoheadrightarrow \CC[X_1^{(1)}, \ldots, X_1^{(d_1)}, \ldots, X_n^{(1)}, \ldots, X_n^{(d_n)}]_{(1,\ldots,1)},
	\]
	is compatible with the described actions of $\SG_{d_1}\times \ldots \times \SG_{d_n} \subset \SG_d$ (conjugation in $\SG_d$ on the left hand side; permuting matrices on the right hand side). 
\end{proof}
From \Cref{thm:otherGradedParts} one can deduce that our invariant ring $\CC[X_1,\ldots,X_n]^{GL_m}$ is (as a $\CC$-algebra) generated by polynomials of the form $\Tr(X_{i_1}\cdots X_{i_k})$. For this reason, it is also called the \emph{trace algebra}. In the future, we will use the notation $R_{m,n}:=\CC[X_1,\ldots,X_n]^{GL_m}$.
\subsection{Trace relations}
	Elements in the kernel of (\ref{eq:TraceRelationMap}) are known as \emph{trace relations}. They describe the polynomial relations that hold between our generators $\Tr(X_{i_1}\cdots X_{i_k})$.
	 Let us illustrate this by an example: take $m=2$ and $n=3$, and consider the degree $(1,1,1)$ part of the invariant ring $R_{2,3}=\CC[X_1,X_2,X_3]^{GL_2}$. By \Cref{thm:multilinearPart} this space is spanned by the 6 polynomials
	 \begin{align*}
	 &\Tr(X_1)\Tr(X_2)\Tr(X_3), &&\Tr(X_1X_2)\Tr(X_3), &&\Tr(X_1X_3)\Tr(X_2), \\ &\Tr(X_2X_3)\Tr(X_1), &&\Tr(X_1X_2X_3), &&\Tr(X_1X_3X_2).
	 \end{align*}
	 The ideal $I_{2,3}$ turns out to be just the linear span of $c_3=\sum_{\sigma \in \SG_3}{\sgn(\sigma)\cdot \sigma}$, which corresponds to the trace relation
	\begin{align*}
	&\Tr(X_1)\Tr(X_2)\Tr(X_3) - \Tr(X_1X_2)\Tr(X_3) - \Tr(X_1X_3)\Tr(X_2) \\ - &\Tr(X_2X_3)\Tr(X_1) + \Tr(X_1X_2X_3) + \Tr(X_1X_3X_2) = 0.
	\end{align*}
	The reader is invited to verify that the above expression really yields zero, for any $2 \times 2$-matrices $X_1,X_2,X_3$.
\section{Polynomial identities of matrices}
\begin{definition}
	Let $\CC\langle x_1,\ldots,x_n\rangle$ be the noncommutative polynomial ring in $n$ variables. An element $f \in \CC\langle x_1,\ldots,x_n\rangle$ is a \emph{polynomial identity for $m \times m$ matrices} if it vanishes when we substitute $x_1,\ldots,x_n$ by generic $m \times m$ matrices $X_1,\ldots,X_n$. The quotient of $\CC\langle x_1,\ldots,x_n\rangle$ by the ideal of polynomial identities is called the \emph{ring of generic $m \times m$ matrices}. We will denote this ring by $\calM_{m,n}$. %
\end{definition}

The ring of generic matrices is an important example of a \emph{Polynomial Identity (PI) ring}. Both PI-rings in general, and rings of generic matrices in particular, have been intensively studied since the 80's. We here present just a few concepts and results that will be relevant later. For more, the reader can consult \cite{DrenskyFormanek2004}.

\subsection{The Hilbert series} To quantify the number of polynomial identities, one can use the \emph{Hilbert series} of $\calM_{m,n}$. Recall that the Hilbert series of a multigraded vector space $R$ is given by
	\[
	H(R;t_1,\ldots,t_n) = \sum_{(d_1,\ldots,d_n) \in \NN^{n}}{\dim R_{(d_1,\ldots,d_n)}t_1^{d_1}\cdots t_n^{d_n}}.
	\]
The Hilbert series of $\CC\langle x_1,\ldots,x_n\rangle$ is given by 
\[
H(\CC\langle x_1,\ldots,x_n\rangle;t_1,\ldots,t_n) = \frac{1}{1-t_1-\cdots-t_n}.
\]
If we also know the Hilbert series of $\calM_{m,n}$, we can compute the dimension of the space of polynomial identities of multidegree $(d_1,\ldots,d_n)$ as the coefficient of $t_1^{d_1}\cdots t_n^{d_n}$ in the difference
\[
\frac{1}{1-t_1-\cdots-t_n} - H(\calM_{m,n};t_1,\ldots,t_n).
\]

Similar to the trace relations in the previous sections, one can use polarization to show it is sufficient to restrict to the multilinear part. Note that $\calM_{m,n}$ is a representation of $GL_n$, as it is the quotient of the representation
\[
\CC\langle x_1,\ldots,x_n\rangle \cong \bigoplus_{d \in \NN}{(\CC^n)^{\ot d}}
\]
 by a subrepresentation. Let us denote the degree $(1,\ldots,1)$ part of ${\calM_{m,d}}$ by $P_{m,d}$. This is a quotient of 
 \[
 \CC\langle x_1,\ldots,x_d\rangle_{(1,\ldots,1)}, %
 \]
which is equipped with a left $\SG_d$-action via
\[
\sigma \cdot (x_{i_1} \cdots x_{i_d}) = x_{\sigma(i_1)} \cdots x_{\sigma(i_d)}. 
\]
Then we have the following theorem, which was independently proved by Berele \cite[Theorem 2.7]{Berele1982} and Drensky (\cite[Remark 1.5]{Drensky1981Representations} and \cite[Lemma 2.3]{Drensky1984Codimensions}). The present formulation is based on \cite[Theorem 2.3.4]{DrenskyFormanek2004}. 
\begin{theorem} \label{thm:SGandGLdecompositionsAgree}
	Suppose the decomposition of $P_{m,d}$ into irreducible $\SG_d$-representations is given by
	\[
	P_{m,d} = \bigoplus_{\lambda \vdash d}V_{\lambda}^{\oplus a_\lambda}.
	\]
	Then the decomposition of $\calM_{m,n}$ into irreducible $GL_n$-representations is given by
	\[
	\calM_{m,n} = \bigoplus_{d \in \NN}\bigoplus_{\substack{\lambda \vdash d \\ \operatorname{len}(\lambda)\leq n}}{\bbS^{\lambda}(\CC^n)^{\oplus a_\lambda}}.
	\]
	Here for $\lambda \vdash d$ a partition, $V_{\lambda}$ denotes the associated Specht module, and $\bbS^\lambda$ the associated Schur functor.
\end{theorem}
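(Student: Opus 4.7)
The plan is to reduce to the multilinear part via polarization, and then repackage the resulting combinatorial identity as a $GL_n$-character equation.

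The first step is to establish a polarization isomorphism for $\calM_{m,n}$ analogous to \Cref{lemma:polarization}:
$$(\calM_{m,n})_{(d_1,\ldots,d_n)} \cong (P_{m,d})^{\SG_{d_1}\times\cdots\times\SG_{d_n}},$$
where $d = d_1 + \cdots + d_n$. The map and its inverse are constructed exactly as in the invariant setting: substitute $x_i \mapsto x_i^{(1)} + \cdots + x_i^{(d_i)}$ to polarize, or identify all $x_i^{(j)}$ with a single $x_i$ to depolarize. For this to descend to the quotient $\calM_{m,n}$ one must verify that the $T$-ideal of polynomial identities of $m \times m$ matrices is preserved by these operations. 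Depolarization is a linear substitution and is evidently compatible; polarization can be written as the coefficient of $t_1 \cdots t_{d_i}$ in $f(\ldots, t_1 x_i^{(1)} + \cdots + t_{d_i} x_i^{(d_i)}, \ldots)$, i.e.\ a linear substitution followed by a $GL$-equivariant projection. I expect this polarization step to be the main (though still standard) technical point.

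Once the polarization isomorphism is in hand, the rest is formal. Combining it with the hypothesis $P_{m,d} \cong \bigoplus_{\lambda \vdash d} V_\lambda^{\oplus a_\lambda}$ and Frobenius reciprocity yields
$$\dim (\calM_{m,n})_{(d_1,\ldots,d_n)} = \sum_{\lambda \vdash d} a_\lambda\, \dim V_\lambda^{\SG_{d_1}\times\cdots\times\SG_{d_n}} = \sum_{\lambda \vdash d} a_\lambda\, K_{\lambda,(d_1,\ldots,d_n)},$$
where $K_{\lambda,\mu}$ denotes the Kostka number, equivalently the multiplicity of $V_\lambda$ in $\operatorname{Ind}_{\SG_{d_1}\times\cdots\times\SG_{d_n}}^{\SG_d}(\mathbf{1})$.

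Finally, summing over all multidegrees of total degree $d$ and using the classical identity $s_\lambda(t_1,\ldots,t_n) = \sum_\mu K_{\lambda,\mu}\, t_1^{\mu_1}\cdots t_n^{\mu_n}$ for the Schur polynomial (which vanishes identically when $\operatorname{len}(\lambda) > n$), one obtains
$$\chi_{\calM_{m,n,d}}(t_1,\ldots,t_n) = \sum_{\substack{\lambda \vdash d \\ \operatorname{len}(\lambda) \leq n}} a_\lambda\, s_\lambda(t_1,\ldots,t_n) = \chi_{\bigoplus_{\lambda} \bbS^\lambda(\CC^n)^{\oplus a_\lambda}}(t_1,\ldots,t_n).$$
Since complex $GL_n$-representations are determined by their characters, this establishes the claimed isomorphism after summing over $d$.
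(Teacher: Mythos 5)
Your proof is correct. Note that the paper itself does not prove this theorem: it is quoted from Berele and Drensky (in the formulation of Drensky--Formanek), so there is no internal proof to compare against. Your argument --- polarization/restitution to identify the weight space $(\calM_{m,n})_{(d_1,\ldots,d_n)}$ with $(P_{m,d})^{\SG_{d_1}\times\cdots\times\SG_{d_n}}$, then Frobenius reciprocity and Young's rule to get the Kostka numbers, then the monomial expansion $s_\lambda = \sum_\mu K_{\lambda\mu}t^\mu$ to recognize the character --- is exactly the standard proof from those references, and it mirrors the polarization technique the paper does spell out for the commutative trace algebra (\Cref{lemma:polarization} feeding into \Cref{thm:otherGradedParts}) and intends to reuse for \Cref{claim:SGandGLdecompositionsAgreeBis}. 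The one step you flag but do not fully carry out, namely that polarization and restitution descend to the quotient by the T-ideal, reduces to the standard fact that in characteristic $0$ a T-ideal is multihomogeneous (stable under $x_i \mapsto t x_i$, plus a Vandermonde argument) and closed under arbitrary substitutions; this is routine, and with it your argument is complete.
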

\begin{corollary} \label{cor:HilbertSeries}
	The Hilbert series of $\calM_{m,n}$ is given by
	\[
	H(\calM_{m,n};t_1, \ldots, t_n) = \sum_{d \in \NN}\sum_{\substack{\lambda \vdash d \\ \operatorname{len}(\lambda)\leq n}}{a_{\lambda}s_\lambda(t_1, \ldots, t_n)},
	\]
	where $a_{\lambda}$ are the coefficients from \Cref{thm:SGandGLdecompositionsAgree}, and $s_\lambda$ denotes the Schur polynomial.
\end{corollary}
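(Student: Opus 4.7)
The plan is to derive this as a direct character computation, with Theorem \ref{thm:SGandGLdecompositionsAgree} as the input. First I would identify the $\NN^n$-multigrading on $\CC\langle x_1, \ldots, x_n \rangle$ (where $\deg(x_i) = e_i$) with the weight decomposition for the diagonal maximal torus $T = (\CC^*)^n \subset GL_n$: the element $t = \diag(t_1, \ldots, t_n)$ scales a monomial $x_{i_1}\cdots x_{i_d}$ by $t_{i_1}\cdots t_{i_d}$, which is exactly the monomial in the $t_i$ corresponding to its multidegree. Since the ideal of polynomial identities is $GL_n$-stable (a linear change of variables in the $x_i$ sends generic matrices to generic matrices, so identities are preserved), the quotient $\calM_{m,n}$ inherits this compatibility, and its degree $(d_1,\ldots,d_n)$ piece coincides with the $T$-weight $(d_1,\ldots,d_n)$ subspace. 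Hence $H(\calM_{m,n}; t_1,\ldots,t_n)$ equals the formal $T$-character of $\calM_{m,n}$.

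Next I would apply Theorem \ref{thm:SGandGLdecompositionsAgree} to decompose $\calM_{m,n}$ as a $GL_n$-representation, hence as a $T$-representation, into summands $\bbS^\lambda(\CC^n)^{\oplus a_\lambda}$. Since characters are additive on direct sums,
\[
H(\calM_{m,n}; t_1,\ldots,t_n) = \sum_{d \in \NN}\sum_{\substack{\lambda \vdash d \\ \operatorname{len}(\lambda)\leq n}} a_\lambda \cdot \mathrm{ch}_T\bigl(\bbS^\lambda(\CC^n)\bigr).
\]
The final ingredient is the classical identity $\mathrm{ch}_T(\bbS^\lambda(\CC^n)) = s_\lambda(t_1,\ldots,t_n)$, which can be taken either as the defining property of $s_\lambda$ via Schur functors or deduced from the Weyl character formula — equivalently, from the tableau model of $\bbS^\lambda$, whose semistandard tableau basis has $T$-weights recovering the combinatorial definition of the Schur polynomial. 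Substituting yields the stated formula.

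No real obstacle presents itself here: the argument is essentially formal once Theorem \ref{thm:SGandGLdecompositionsAgree} is available. The two minor points worth flagging are the $GL_n$-stability of the ideal of polynomial identities (so that the $\NN^n$-grading and the $GL_n$-structure are genuinely compatible) and the well-definedness of the double sum as a formal power series. The latter is automatic, since for each fixed multidegree $(d_1,\ldots,d_n)$ only finitely many $\lambda$ contribute — namely those with $|\lambda| = d_1 + \cdots + d_n$ and $\operatorname{len}(\lambda) \leq n$.
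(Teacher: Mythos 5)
Your argument is correct and is exactly the (implicit) deduction the paper intends: the multigrading on $\calM_{m,n}$ is the weight decomposition for the diagonal torus in $GL_n$, so the Hilbert series is the formal character, and applying \Cref{thm:SGandGLdecompositionsAgree} together with $\mathrm{ch}\bigl(\bbS^\lambda(\CC^n)\bigr) = s_\lambda(t_1,\ldots,t_n)$ gives the formula. The paper offers no separate proof of the corollary, and your two flagged points (the $GL_n$-stability of the ideal of identities, which the paper also asserts, and the finiteness of contributions in each multidegree) are handled correctly.
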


In general, it is a hard task to compute the multiplicities $a_\lambda$. However, in the case of $2 \times 2$ matrices ($m=2$) a formula is known \cite{Drensky1984Codimensions,Formanek1984Invariants}:
\begin{theorem} \label{thm:decompositionFor2x2}
For $m=2$, the coefficients $a_{\lambda}$ above are $0$ whenever $\operatorname{len}(\lambda) > 4$. If $\operatorname{len}(\lambda) \leq 4$, then $a_{\lambda}$ is given by the table below.
\begin{center}
\begin{tabular}{c|c}
	$\lambda$ & $a_\lambda$ \\
	\hline
	$(\lambda_1,0,0,0)$ & $1$ \\
	$(\lambda_1,1,0,0)$ & $\lambda_1$ \\
	$(\lambda_1,\lambda_2,0,0)$ with $\lambda_2>1$ & $(\lambda_1-\lambda_2+1)\lambda_2$ \\
	$(\lambda_1,1,1,0)$ & $2\lambda_1-1$ \\
	$(\lambda_1,1,1,1)$ & $\lambda_1-1$ \\
	$(\lambda_1,\lambda_2,\lambda_3,\lambda_4)$ not in the cases above & $(\lambda_1-\lambda_2+1)(\lambda_2-\lambda_3+1)(\lambda_3-\lambda_4+1)$ \\
\end{tabular}	
\end{center}
\end{theorem}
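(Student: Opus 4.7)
The plan is to compute the $\SG_d$-character of $P_{2,d}$ for each $d$, since by \Cref{thm:SGandGLdecompositionsAgree} this character already encodes all of the multiplicities $a_\lambda$. The analysis splits $\calM_{2,n}$ into a ``trace'' contribution, which is accessible through \Cref{thm:otherGradedParts}, and a ``non-scalar'' contribution, which must be handled more directly.

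The starting point is the Cayley--Hamilton identity for a generic $2\times 2$ matrix, $X^2 = \Tr(X)X - \det(X)I$. By the Procesi--Razmyslov theorem, the $T$-ideal of all trace identities of $M_2(\CC)$ is generated by (polarizations of) this single identity. Linearizing it yields the fundamental relation
\[
XY + YX = \Tr(X)Y + \Tr(Y)X + \bigl(\Tr(XY) - \Tr(X)\Tr(Y)\bigr)I,
\]
which rewrites any symmetrized product as a single product plus trace terms. Iterating gives Procesi's structural decomposition
\[
\calM_{2,n} \;=\; R_{2,n}\cdot 1 \;\oplus\; \bigoplus_{i=1}^n R_{2,n}\cdot X_i^0 \;\oplus\; \bigoplus_{1\leq i<j\leq n} R_{2,n}\cdot [X_i^0, X_j^0],
\]
where $X_i^0 := X_i - \tfrac{1}{2}\Tr(X_i)I$. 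The module rank $4 = m^2$ of this decomposition is what forces $a_\lambda = 0$ as soon as $\operatorname{len}(\lambda) > 4$, since every summand has $GL_n$-highest weight bounded by that of either the trivial, the defining, or the $\bigwedge^2$ representation of $GL_n$, tensored with a trace-ring factor.

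Next I would compute the $\SG_d$-character of each summand separately. For the multilinear part of $R_{2,n}$, \Cref{thm:otherGradedParts} together with \Cref{rmk:Imn} describes it as a quotient of $\CC[\SG_d]$ modulo $I_{2,d}$ and modulo the cyclic symmetry of traces, so only $\lambda$ with $\operatorname{len}(\lambda) \leq 2$ contribute here. The three remaining summands contribute via tensoring this trace character with the $GL_n$-modules $\CC^n$ (for the $X_i^0$) and $\bigwedge^2 \CC^n$ (for the commutator brackets), extending the length of admissible partitions up to $3$ and $4$ respectively. Applying the Pieri rule to each tensor product yields a closed-form Schur expansion for the character of $\calM_{2,n}$ as a $GL_n$-representation.

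The main obstacle is the final combinatorial step: collapsing these three separate Schur expansions into the piecewise closed formula in the statement. The generic expression $(\lambda_1 - \lambda_2 + 1)(\lambda_2 - \lambda_3 + 1)(\lambda_3 - \lambda_4 + 1)$ is strongly suggestive of a product of ``Pieri gap counts,'' and I would try to verify that it arises as the number of ways to build $\lambda$ compatibly across the three Pieri expansions. The special rows in the table --- the hooks $(\lambda_1, 1, 1, 0)$ and $(\lambda_1,1,1,1)$, and the short shapes $(\lambda_1)$, $(\lambda_1, 1)$, and $(\lambda_1, \lambda_2)$ --- are precisely the cases in which the generic formula would predict an over-counted or nonsensical multiplicity; these degeneracies reflect genuine cancellations between contributions from different summands, for example because the relation $[X_i^0, X_j^0] + [X_j^0, X_i^0] = 0$ kills contributions from symmetric tableaux at low length. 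Verifying that each such degeneracy matches the explicit table entry is where I expect the bulk of the technical work to lie.
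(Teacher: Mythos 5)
Your statement is one the paper does not prove at all: it is quoted as a known theorem of Drensky and Formanek, with the proof delegated to \cite{Drensky1984Codimensions,Formanek1984Invariants}. So there is no in-paper argument to compare against; I can only assess your proposal on its own terms. Its overall strategy --- reduce everything to the Razmyslov--Procesi consequences of the $2\times 2$ Cayley--Hamilton identity, decompose the generic matrix algebra over the trace ring, and assemble the $GL_n$-character summand by summand --- is genuinely close in spirit to how Formanek actually proceeds, and the explanation of why $a_\lambda=0$ for $\operatorname{len}(\lambda)>4$ (alternation in more than $\dim M_2(\CC)=4$ elements) is the standard and correct one.

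However, there are concrete gaps that would sink the argument as written. First, $\calM_{2,n}$ is not a module over $R_{2,n}$ at all: multiplying a generic matrix by a trace lands in the \emph{mixed} trace ring $T_{2,n}=R_{2,n}\cdot\calM_{2,n}$, which strictly contains $\calM_{2,n}$. Your decomposition can only be a statement about $T_{2,n}$, and the passage from the character of $T_{2,n}$ back to that of $\calM_{2,n}$ is a genuinely nontrivial step (this is exactly where Formanek's exact sequences relating $\calM_{2,n}$, $T_{2,n}$ and the pure trace ring enter); it is missing entirely. Second, even for $T_{2,n}$ the proposed sum is not direct once $n\geq 3$: the traceless parts $X_i^0$ live in the $3$-dimensional space $\fraksl_2$, so $[X_1^0,X_2^0]$ becomes, after clearing a trace-polynomial denominator, an $R_{2,n}$-linear combination of $X_1^0,X_2^0,X_3^0$, and the $1+n+\binom{n}{2}$ generators satisfy many relations (your own aside that the rank should be $4=m^2$ is in tension with this count). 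Third, and most seriously, the claim that the pure trace ring only contributes partitions of length $\leq 2$ confuses the two sides of Schur--Weyl duality. \Cref{rmk:Imn} bounds the number of rows of $\lambda$ in the \emph{left/right regular} decomposition of $\CC[\SG_d]/I_{2,d}$, which governs the $GL_m=GL_2$ side. The decomposition relevant to the $a_\lambda$ is the one under the \emph{conjugation} action, where each isotypic block contributes $V_\lambda\otimes V_\lambda^*$ and hence Kronecker constituents $V_\mu$ with $\mu$ of length up to $4$; for instance $V_{(2,2)}\otimes V_{(2,2)}$ contains the sign representation of $\SG_4$, so the pure trace ring of $2\times 2$ matrices already has an alternating multilinear invariant of four matrices, i.e.\ a contribution to $\lambda=(1,1,1,1)$. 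With that miscount, the subsequent Pieri bookkeeping cannot reproduce the table, so the final combinatorial step you flag as the ``bulk of the work'' would in fact fail before it starts.
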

Combining \Cref{thm:decompositionFor2x2} with \Cref{cor:HilbertSeries} yields the Hilbert series of $\calM_{2,n}$, for any $n$. For fixed small $n$, the obtained expression can be simplified significantly. We present it here in the case $n=2$. 
\begin{theorem}[{\cite{Formanek1984Invariants}}]
	The Hilbert series of the ring two generic $2 \times 2$-matrices is given by
	\[
	H(\calM_{2,2};t_1,t_2) = \frac{1-t_1-t_2+t_1t_2+t_1^2t_2+t_1t_2^2-t_1^2t_2^2}{(1-t_1)^2(1-t_2)^2(1-t_2t_2)}.
	\]
\end{theorem}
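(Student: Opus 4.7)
The plan is to evaluate the expression given by Corollary \ref{cor:HilbertSeries} in the specific case $m=2$, $n=2$, using the multiplicities supplied by Theorem \ref{thm:decompositionFor2x2}. Since we are plugging in only two variables, every Schur polynomial $s_\lambda(t_1,t_2)$ with $\operatorname{len}(\lambda) > 2$ vanishes. In particular, only the first three rows of the table in Theorem \ref{thm:decompositionFor2x2} contribute, and moreover the formula $a_{(\lambda_1,\lambda_2)} = (\lambda_1-\lambda_2+1)\lambda_2$ of the third row gives $\lambda_1$ at $\lambda_2 = 1$, so it also captures the second row. Thus we may split the Hilbert series as
\[
H(\calM_{2,2};t_1,t_2) \;=\; \sum_{\lambda_1 \geq 0} s_{(\lambda_1)}(t_1,t_2) \;+\; \sum_{\lambda_1 \geq \lambda_2 \geq 1} (\lambda_1-\lambda_2+1)\lambda_2 \, s_{(\lambda_1,\lambda_2)}(t_1,t_2).
\]

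For the first sum, $s_{(\lambda_1)}(t_1,t_2) = h_{\lambda_1}(t_1,t_2)$, and the standard generating identity gives $\sum_{\lambda_1 \geq 0} h_{\lambda_1}(t_1,t_2) = \frac{1}{(1-t_1)(1-t_2)}$. For the second sum, I would substitute the bialternant formula
\[
s_{(\lambda_1,\lambda_2)}(t_1,t_2) \;=\; \frac{t_1^{\lambda_1+1}t_2^{\lambda_2} - t_1^{\lambda_2}t_2^{\lambda_1+1}}{t_1-t_2},
\]
and reindex via $k := \lambda_1 - \lambda_2 \geq 0$ and $j := \lambda_2 \geq 1$, so that $t_1^{\lambda_1+1}t_2^{\lambda_2} = t_1^{k+1}(t_1 t_2)^j$ and analogously on the other side. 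The double sum then factors as
\[
\frac{1}{t_1-t_2}\Biggl(\sum_{k \geq 0}(k+1)\bigl(t_1^{k+1}-t_2^{k+1}\bigr)\Biggr)\Biggl(\sum_{j \geq 1} j\,(t_1 t_2)^j\Biggr).
\]
Both inner sums are now instances of $\sum_{m \geq 1} m\,x^m = x/(1-x)^2$.

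After evaluating each geometric-type sum, the first factor becomes $\frac{t_1}{(1-t_1)^2} - \frac{t_2}{(1-t_2)^2}$, which I would put over a common denominator. The key simplification step---where one should actually be careful---is the identity
\[
t_1(1-t_2)^2 - t_2(1-t_1)^2 \;=\; (t_1-t_2)(1-t_1 t_2),
\]
which cancels the leading $1/(t_1-t_2)$ and leaves a clean rational function of $t_1,t_2$. Combining with the factor $t_1 t_2/(1-t_1 t_2)^2$ from the $j$-sum yields
\[
\frac{t_1 t_2}{(1-t_1)^2(1-t_2)^2(1-t_1 t_2)}
\]
for the second sum of the Hilbert series. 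Adding this to $\frac{1}{(1-t_1)(1-t_2)}$ over the common denominator $(1-t_1)^2(1-t_2)^2(1-t_1 t_2)$ and expanding $(1-t_1)(1-t_2)(1-t_1t_2) + t_1t_2$ produces exactly the numerator $1 - t_1 - t_2 + t_1 t_2 + t_1^2 t_2 + t_1 t_2^2 - t_1^2 t_2^2$, completing the proof. There is no conceptual obstacle here beyond the algebraic manipulation; the only step that is easy to botch is the factorization identity $(t_1-t_2)(1-t_1 t_2)$, which is what makes the whole sum collapse to such a compact rational expression.
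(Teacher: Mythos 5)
Your computation is correct and follows exactly the route the paper indicates (the theorem is stated with a citation to Formanek, the ``proof'' being the remark that it follows by combining \Cref{thm:decompositionFor2x2} with \Cref{cor:HilbertSeries}); you have simply carried out that summation explicitly, and every step checks out --- merging rows two and three of the table via $a_{(\lambda_1,1)}=\lambda_1$, the bialternant reindexing, and the key factorization $t_1(1-t_2)^2-t_2(1-t_1)^2=(t_1-t_2)(1-t_1t_2)$. Incidentally, your derivation confirms that the factor $(1-t_2t_2)$ in the paper's displayed denominator is a typo for $(1-t_1t_2)$.
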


\subsection{Relation to the trace algebra} 
\begin{definition}
	By $R_{m,n}^{\cyc}\subseteq R_{m,n}$ we mean the linear subspace spanned by elements of the form $\Tr(g(X_1,\ldots,X_{n}))$, where $g(x_1,\ldots,x_{n}) \in \CC \langle x_1,\ldots,x_{n} \rangle$. 
	Furthermore, let $\CC[\SG_d]_{\cyc} \subseteq \CC[\SG_d]$ be the subspace spanned by permutations consisting of a single cycle of length $d$.
\end{definition}
Note that \eqref{eq:TraceRelationMap} restricts to a surjective map
\begin{align} \label{eq:TraceRelationMapCyclic}
	\begin{split}
		\CC[\SG_d]_{\cyc}^{\SG_{d_1}\times \ldots \times \SG_{d_n}} &\twoheadrightarrow R^{\cyc}_{m,n;(d_1,\ldots,d_n)}\\
		\left(i_1\ldots i_d\right)&\mapsto \Tr(X_{i_1}\cdots X_{i_d})
	\end{split}
\end{align}
whose kernel is given by
\begin{equation}\label{eq:kernel}
I_{m,d} \cap \CC[\SG_d]_{\cyc}^{\SG_{d_1}\times \ldots \times \SG_{d_n}}.
\end{equation}
\begin{example}
	$R_{2,3;(1,1,1)}^{\cyc}$ is the linear span of $\Tr(X_1X_2X_3)$ and $\Tr(X_1X_3X_2)$, and $\CC[\SG_d]_{\cyc}$ is the linear span of $(123)$ and $(132)$. In this case, the map \eqref{eq:TraceRelationMapCyclic} is an isomorphism.
\end{example}

Consider the linear map 
\begin{align*}
	T: \CC\langle x_1, \ldots, x_n \rangle & \to \CC[X_1,\ldots,X_n,X_{n+1}] \\
	f(x_1,\ldots,x_n) & \mapsto \Tr(f(X_1,\ldots,X_n)\cdot X_{n+1})
\end{align*}
Note that this actually lands in the invariant ring $\CC[X_1,\ldots,X_n,X_{n+1}]^{GL_m} = R_{m,n+1}$. Moreover, one verifies that 
\begin{itemize}
	\item the kernel of $T$ is the ideal of polynomial identities.
	\item the image of $T$ is the subspace of $R_{m,n+1}^{\cyc}$ consisting of all trace polynomials that have degree $1$ the variable $X_{n+1}$. 
\end{itemize}

In other words, $T$ induces isomorphisms 
\begin{equation} \label{eq:PIAndTraceAlg}
	\calM_{m,n;(d_1,\ldots,d_n)} \cong R_{m,n+1;(d_1,\ldots,d_n,1)}^{\cyc}.
\end{equation}

\begin{remark}
Combining \eqref{eq:PIAndTraceAlg} and \eqref{eq:TraceRelationMapCyclic}, we get a surjective linear map
\begin{align*}
	\CC[\SG_{d+1}]^{\SG_{d_1}\times \ldots \times \SG_{d_n} \times \SG_1}_{\text{cyc}} & \twoheadrightarrow \calM_{m,d;(d_1,\ldots,d_n)} \\
	(i_1, \ldots, i_n, d+1) &\mapsto X_{i_1} \cdots X_{i_n}
\end{align*} 
whose kernel is given by
\begin{equation} \label{eq:linearEquationshMPS}
	I_{m,d+1} \cap \CC[\SG_{d+1}]^{\SG_{d_1}\times \ldots \times \SG_{d_n} \times \SG_1}_{\text{cyc}}.
\end{equation}

In particular, the space of multilinear polynomial identities in $d$ matrices can be identified with the intersection 
\[
I_{m,d+1} \cap \CC[\SG_{d+1}]_{\cyc}.
\]

In other words, we have
\begin{equation} \label{eq:Pmdcyc}
P_{m,d} \cong \frac{\CC[\SG_{d+1}]_{\cyc}}{I_{m,d+1} \cap \CC[\SG_{d+1}]_{\cyc}}.
\end{equation}

Recall that $P_{m,d}$ came equipped with an $\SG_d$-action. Under the isomorphism \eqref{eq:Pmdcyc}, this action corresponds to the action of $\SG_d \subset \SG_{d+1}$ on $\CC[\SG_{d+1}]$ by conjugation.
\end{remark}

\section{Matrix product states}

Matrix product states arise in the context of quantum many-body systems, where they are used to model ground states of certain systems. We will be concerned with two variants which model translation-invariant systems. For the purpose of these notes, we think about them as low-dimensional subvarieties of a high-dimensional tensor space. For more on matrix product states and their geometry, we refer to \cite{PerezGarciaVerstraeteMPS,haegeman2014geometry,critchMorton,TS:uMPS}.

\begin{definition}
	The variety $\uMPS(m,n,d)$ of \emph{uniform matrix product states} is the closed image of the map
	\begin{align}\label{eq:umpsMap}
		\begin{split}
			 (\CC^{m\times m})^n &\to (\CC^n)^{\otimes d} \\
			(A_1 \vvirg A_{n})&\mapsto \sum_{1 \leq i_1\vvirg i_d \leq n} \Tr(A_{i_1}\cdots A_{i_d}) \ e_{i_1}\ootimes e_{i_d}.
		\end{split}
	\end{align}
	The variety $\hMPS(m,n,d)$ of \emph{homogeneous matrix product} states is the closed image of the map
	\begin{align}\label{eq:hmpsMap}
		\begin{split}
			  (\CC^{m\times m})^{n+1} &\to (\CC^n)^{\otimes d} \\
			(X,A_1 \vvirg A_{n})&\mapsto \sum_{1 \leq i_1\vvirg i_d \leq n} \Tr(X A_{i_1}\cdots A_{i_d}) \ e_{i_1}\ootimes e_{i_d}.
		\end{split}
	\end{align}
\end{definition}

The goal of this section is to study the linear spaces spanned by these varieties. For previous work on this topic, see \cite{NV} ($\hMPS$) and \cite{TS:uMPSlinear} ($\uMPS$).

\subsection{Homogeneous matrix product states}
If we write $e_I := e_{i_1} \ot \cdots \ot e_{i_d}$ for a word $I=(i_1, \ldots, i_d)$, then
the space $(\CC^n)^{\otimes d}$ has a basis $\{e_I\}_{I \in [n]^d}$. We will denote the dual basis by $\{x_I\}_{I \in [n]^d}$. The following observation links homogeneous matrix product states to polynomial identities of matrices:
\begin{observation}
	An equation $\sum_I{\lambda_I x_I}$ vanishes on $\hMPS(m,n,d)$ if and only if the identity $\sum_I{\lambda_I\Tr(X A_{i_1}\cdots A_{i_d})}=0$ holds for all matrices $X,A_i$, if and only if $\sum_I{\lambda_I A_{i_1}\cdots A_{i_d}}=0$ is a polynomial identity of matrices. 
\end{observation}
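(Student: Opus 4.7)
This statement is essentially an unwinding of the definition of $\hMPS(m,n,d)$ combined with the earlier description of the map $T$. The plan is to dispatch each of the two equivalences with a short argument, and my main concern is bookkeeping: making sure the conventions line up across the three equivalent phrasings.

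For the first equivalence, between vanishing on $\hMPS(m,n,d)$ and the middle condition, the approach is standard. Since $\hMPS(m,n,d)$ is defined as the Zariski closure of the image of the map in \eqref{eq:hmpsMap}, and a linear form vanishes on a set exactly when it vanishes on its closure (the kernel of a linear functional is Zariski-closed), it suffices to evaluate $\sum_I \lambda_I x_I$ on a typical image point $\sum_I \Tr(X A_{i_1}\cdots A_{i_d})\, e_I$ and read off the resulting scalar $\sum_I \lambda_I \Tr(X A_{i_1}\cdots A_{i_d})$, which has to vanish for every choice of $X, A_1, \ldots, A_n$.

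For the second equivalence, I would set $f := \sum_I \lambda_I x_{i_1}\cdots x_{i_d} \in \CC\langle x_1, \ldots, x_n\rangle$ and apply the previously-defined map $T$. By construction,
\[
T(f) = \Tr\bigl(f(X_1,\ldots,X_n) \cdot X_{n+1}\bigr) = \sum_I \lambda_I \Tr(X_{i_1}\cdots X_{i_d} X_{n+1}),
\]
which after renaming $X_{n+1} = X$ and using cyclicity of the trace coincides with the middle condition. So the middle condition is exactly $T(f) = 0$, while the third condition says that $f$ lies in the ideal of polynomial identities. These are equivalent by the already-established identification of $\ker T$ with that ideal.

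The only genuinely non-trivial input is that identification of $\ker T$, which ultimately rests on the non-degeneracy of the trace pairing on $\Mat(m \times m, \CC)$: if $\Tr(A \cdot Y) = 0$ for all $Y$, then $A = 0$. Since that step is already settled in the preceding subsection, I expect the observation itself to be essentially a one-line remark once the notational correspondences (the bijection between multilinear noncommutative monomials and the basis $\{e_I\}$, and the fact that placing $X$ at the front is allowed by cyclicity) are made explicit.
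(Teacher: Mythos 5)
Your proof is correct and matches the paper's (implicit) argument: the paper states this as an unproved observation, and the intended justification is exactly your unwinding of the definition of $\hMPS(m,n,d)$ plus the non-degeneracy of the trace pairing (equivalently, the identification of $\ker T$ with the ideal of polynomial identities). Nothing is missing.
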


\begin{corollary}
	The linear span of $\hMPS(m,n,d)$ has dimension equal to $\dim \calM_{m,n;d}$, the degree $d$ part of the ring of $n$ generic $m\times m$-matrices. In other words, we have
	\[
 \sum_{d\in \NN}{\dim \langle \hMPS(m,n,d) \rangle t^d} = H(\calM_{m,n};t,\ldots,t).
	\]
\end{corollary}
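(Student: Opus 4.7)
The statement is essentially a direct corollary of the preceding observation, so my plan is to just make the bookkeeping precise. The main point is to turn the observation (which is about linear forms vanishing on $\hMPS(m,n,d)$) into a dimension count via the standard duality between a linear span and its annihilator.

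First I would identify the dual space $\bigl((\CC^n)^{\otimes d}\bigr)^*$ with the degree-$d$ part of the noncommutative polynomial ring $\CC\langle x_1,\ldots,x_n\rangle$, via the isomorphism sending the multi-index monomial $x_I = x_{i_1}\cdots x_{i_d}$ to the dual basis vector of $e_I = e_{i_1}\otimes\cdots\otimes e_{i_d}$. Under this identification, the observation states precisely that the annihilator $\langle \hMPS(m,n,d)\rangle^\perp$ is exactly the degree-$d$ homogeneous part of the ideal of polynomial identities for $m\times m$ matrices. Taking quotients by this ideal, the induced surjection
\[
\CC\langle x_1,\ldots,x_n\rangle_d \twoheadrightarrow \calM_{m,n;d}
\]
has kernel of dimension equal to $\dim \langle \hMPS(m,n,d)\rangle^\perp$.

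Second, since $\dim (\CC^n)^{\otimes d} = n^d = \dim \CC\langle x_1,\ldots,x_n\rangle_d$, the duality
\[
\dim \langle \hMPS(m,n,d)\rangle = n^d - \dim \langle \hMPS(m,n,d)\rangle^\perp
\]
together with the previous paragraph gives $\dim \langle \hMPS(m,n,d)\rangle = \dim \calM_{m,n;d}$, proving the first claim.

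Finally, the generating function identity is obtained by multiplying both sides by $t^d$ and summing over $d \in \NN$. The right-hand side is, by definition, the specialization of the multigraded Hilbert series $H(\calM_{m,n};t_1,\ldots,t_n)$ at $t_1 = \cdots = t_n = t$, since $\calM_{m,n;d} = \bigoplus_{d_1+\cdots+d_n = d} \calM_{m,n;(d_1,\ldots,d_n)}$. There is no real obstacle here; the only thing to be careful about is matching conventions between the total-degree grading used in $\hMPS(m,n,d)$ and the multigrading on $\calM_{m,n}$, but this is immediate.
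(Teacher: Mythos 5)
Your argument is correct and is exactly the intended one: the paper treats this corollary as an immediate consequence of the preceding observation, and your write-up simply makes precise the duality between the linear span and its annihilator (identified with the degree-$d$ part of the ideal of polynomial identities inside $\CC\langle x_1,\ldots,x_n\rangle_d \cong \bigl((\CC^n)^{\otimes d}\bigr)^*$), which is the same route. No gaps; the bookkeeping with the multigrading versus total degree is handled correctly.
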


\begin{example} For $m=n=2$, we have
	\begin{align*}
		\sum_{d\in \NN}{\dim \langle \hMPS(2,2,d) \rangle t^d} =& H(\calM_{2,2};t,t)\\ 
		=& \frac{1-2t+t^2+2t^3-t^4}{(1-t)^4(1-t^2)} \\
		=& 1 + 2t + 4t^2 + 8t^3 + 16t^4 + 30t^5 + 53t^6 + 88t^7 + 139t^8 + 210t^9 \\
		 & + 306t^{10} + 432t^{11} + 594t^{12} + 798t^{13} + 1051t^{14} + 1360t^{15} + \cdots %
	\end{align*}
	Equivalently, we have the following explicit formula:
	\[
	\dim \langle \hMPS(2,2,d) \rangle = \begin{cases}
		\frac{1}{48}(d^4+4d^3+2d^2+44d+48) & \text{for } d \text{ even,} \\
		\frac{1}{48}(d^4+4d^3+2d^2+44d+45) & \text{for } d \text{ odd.}
	\end{cases}
	\]
	Compare with the top row of Table 1 in \cite{NV}. For $m=2$ and larger $n$, it is possible to write down a similar formula using \Cref{thm:decompositionFor2x2}. However, larger $m$ are much more difficult: even for $m=3$ and $n=2$ no exact formula is known to the best of my awareness.
\end{example}

\subsection{Uniform matrix product states}
	Since traces of matrices are invariant under cyclic permutations, the variety $\uMPS(m,n,d)$ is contained in the linear subspace $\Cyc^d(\CC^n) \subseteq (\CC^n)^{\ot d}$ of tensors that are invariant under cyclic permutations.  
	Consider the equivalence relation on $[n]^d$ given by identifying words with their cyclic permutations. The equivalence classes are known as \emph{necklaces} of length $d$ on the alphabet $[n]$; we will denote the set of such necklaces by $N_d([n])$.
	For a necklace $N \in N_d([n])$, we define $e_N := \sum_{(i_1,\ldots,i_d) \in N}{e_{i_1}\ot \cdots \ot e_{i_d}}$. Then $\{e_N\}_{N \in N_d([n])}$ is a basis of $\Cyc^d(\CC^n)$.
	We will denote the dual basis by $\{y_N\}_{N \in N_d([n])}$.
	
	As before, we see that linear equations $\sum_N{\lambda_N y_N}$ vanishing on $\uMPS(m,n,d)$ correspond to trace relations $\sum_N{\lambda_N \Tr(A_{i_1}\cdots A_{i_d})}=0$ , where $(i_1,\ldots,i_d)$ is a representative of the necklace $N$. Hence we find:
	\begin{observation} \label{obs:uMPSLinearSpan}
		The linear span of $\uMPS(m,n,d)$ has dimension equal to $\dim R^{\cyc}_{m,n;d}$. In other words:
		\[
		\sum_{d\in \NN}{\dim \langle \uMPS(m,n,d) \rangle t^d} = H(R^{\cyc}_{m,n};t,\ldots,t).
		\]
	\end{observation}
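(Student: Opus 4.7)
The plan is to mimic the argument sketched for $\hMPS$, just with the cyclic basis $\{e_N\}$ replacing $\{e_I\}$ and with trace relations replacing polynomial identities. The map \eqref{eq:umpsMap} has image contained in $\Cyc^d(\CC^n)$ (because traces are cyclic), and the issue is only to recover the coefficients in the basis $\{e_N\}$ and then dualize.

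First I would expand the image of the map \eqref{eq:umpsMap} in the basis $\{e_N\}_{N \in N_d([n])}$. Grouping the sum over words $[n]^d$ by cyclic equivalence class and using that $\Tr(A_{i_1}\cdots A_{i_d})$ is constant on a necklace $N$, the coefficient of $e_N$ in $\sum_{I}\Tr(A_{i_1}\cdots A_{i_d})\,e_I$ is precisely $\Tr(A_{i_1}\cdots A_{i_d})$ for any representative $(i_1,\ldots,i_d)$ of $N$. Consequently a linear form $\sum_N \lambda_N y_N$ on $\Cyc^d(\CC^n)$ vanishes on the image of \eqref{eq:umpsMap} if and only if
\[
\sum_N \lambda_N \Tr(A_{i_1}\cdots A_{i_d}) = 0 \quad \text{for all } (A_1,\ldots,A_n)\in(\CC^{m\times m})^n,
\]
i.e.\ if and only if $\sum_N \lambda_N (i_1\,i_2\cdots i_d)$ lies in the kernel of the restriction \eqref{eq:TraceRelationMapCyclic} (summed over multidegrees totaling $d$).

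Next I would identify the relevant surjection. By the definition of $R^{\cyc}_{m,n}$, the linear map
\[
\CC^{N_d([n])} \twoheadrightarrow R^{\cyc}_{m,n;d}, \qquad (\lambda_N)_N \mapsto \sum_N \lambda_N \Tr(X_{i_1}\cdots X_{i_d}),
\]
is surjective: every element of $R^{\cyc}_{m,n;d}$ is by definition a linear combination of traces of degree-$d$ noncommutative monomials, and cyclicity lets us choose one representative per necklace. Combining this with the previous step, the space of linear forms vanishing on $\uMPS(m,n,d)$ is exactly the kernel of this surjection, so by rank-nullity
\[
\dim \langle\,\mathrm{image of }\eqref{eq:umpsMap}\,\rangle \;=\; |N_d([n])| - \bigl(|N_d([n])| - \dim R^{\cyc}_{m,n;d}\bigr) \;=\; \dim R^{\cyc}_{m,n;d}.
\]
Because linear subspaces are Zariski closed, the linear span is unchanged when we pass to the Zariski closure, so $\dim \langle \uMPS(m,n,d)\rangle = \dim R^{\cyc}_{m,n;d}$.

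Finally, the generating-function identity follows by summing over $d$: the specialization $H(R^{\cyc}_{m,n};t,\ldots,t)$ is by definition $\sum_{d} \dim R^{\cyc}_{m,n;d}\, t^d$, where $R^{\cyc}_{m,n;d}$ is the total-degree $d$ part. I do not foresee a serious obstacle here; the only mildly delicate point is the bookkeeping in the basis expansion (correctly tracking cyclic repetitions when a necklace has period strictly less than $d$), but since the trace is constant on each cyclic class this issue washes out and one obtains the simple formula above.
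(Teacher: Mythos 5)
Your argument is correct and follows the same route as the paper, which simply notes ``as before'' that linear equations $\sum_N \lambda_N y_N$ vanishing on $\uMPS(m,n,d)$ correspond to trace relations and immediately concludes the dimension count. You have merely spelled out the details the paper leaves implicit (the expansion in the necklace basis, the surjection onto $R^{\cyc}_{m,n;d}$, and rank--nullity), all of which check out.
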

	In order to determine the Hilbert series of $R^{\cyc}_{m,n}$, we can try the same approach as for $\calM_{m,n}$. In particular, I claim that the analogue of \Cref{thm:SGandGLdecompositionsAgree} holds. Let us write
	\[
	\tilde{P}_{m,d}:=R^{\cyc}_{m,d;(1,\ldots,1)}.
	\]
	\begin{claim} \label{claim:SGandGLdecompositionsAgreeBis}
		Suppose the decomposition of $\tilde{P}_{m,d}$ into irreducible $\SG_d$-representations is given by
		\[
		\tilde{P}_{m,d} = \bigoplus_{\lambda \vdash d}V_{\lambda}^{\oplus b_\lambda}.
		\]
		Then the decomposition of $R^{\cyc}_{m,n}$ into irreducible $GL_n$-representations is given by
		\[
		R^{\cyc}_{m,n} = \bigoplus_{d \in \NN}\bigoplus_{\substack{\lambda \vdash d \\ \operatorname{len}(\lambda)\leq n}}{\bbS^{\lambda}(\CC^n)^{\oplus b_\lambda}}.
		\]
		Hence, the Hilbert series of $R^{\cyc}_{m,n}$ is given by
		\[
		H(R^{\cyc}_{m,n};t_1, \ldots, t_n) = \sum_{d \in \NN}\sum_{\substack{\lambda \vdash d \\ \operatorname{len}(\lambda)\leq n}}{b_{\lambda}s_\lambda(t_1, \ldots, t_n)}.
		\]
	\end{claim}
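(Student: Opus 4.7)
The plan is to mirror the strategy behind \Cref{thm:SGandGLdecompositionsAgree}: combine a natural polynomial $GL_n$-action on $R^{\cyc}_{m,n}$ with Schur--Weyl duality and a polarization argument. The hypothesis concerns an $\SG_d$-decomposition of the multilinear piece, and what needs to be extracted is a $GL_n$-decomposition of the whole space for every $n$.

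First, I would equip $R^{\cyc}_{m,n}$ with a polynomial $GL_n$-action. The group $GL_n$ acts on tuples of $m\times m$ matrices by taking linear combinations, $g\cdot(X_1,\ldots,X_n)=\bigl(\sum_j g_{1j}X_j,\ldots,\sum_j g_{nj}X_j\bigr)$, and this commutes with simultaneous $GL_m$-conjugation, so it descends to an action on $R_{m,n}$. The cyclic-trace subspace $R^{\cyc}_{m,n}$ is preserved because
\[
g\cdot \Tr(X_{i_1}\cdots X_{i_d}) \;=\; \sum_{j_1,\ldots,j_d} g_{i_1 j_1}\cdots g_{i_d j_d}\,\Tr(X_{j_1}\cdots X_{j_d}),
\]
so each graded piece $R^{\cyc}_{m,n;d}$ becomes a polynomial $GL_n$-representation of degree $d$.

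Next I would invoke the standard Schur--Weyl transfer: any polynomial $GL_n$-representation $W$ of degree $d$ decomposes as $W\cong\bigoplus_{\lambda\vdash d,\,\ell(\lambda)\leq n}\bbS^\lambda(\CC^n)^{\oplus c_\lambda}$, where $c_\lambda$ equals the multiplicity of the Specht module $V_\lambda$ inside the $(1,\ldots,1)$-weight subspace of $W$, viewed as an $\SG_d$-representation by permuting coordinates, provided $n\geq \ell(\lambda)$. Applied to the total-degree-$d$ piece of $R^{\cyc}_{m,d}$ (taking $n=d$ matrix variables, so that every $\lambda\vdash d$ is reachable), the $(1,\ldots,1)$-weight space is by definition $\tilde{P}_{m,d}$, whose $\SG_d$-decomposition is $\bigoplus_\lambda V_\lambda^{\oplus b_\lambda}$ by hypothesis. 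This forces $c_\lambda=b_\lambda$ for every $\lambda\vdash d$, which gives the desired $GL_n$-decomposition. The Hilbert series formula then drops out immediately from $\mathrm{ch}(\bbS^\lambda(\CC^n))=s_\lambda(t_1,\ldots,t_n)$.

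The hard part will be justifying that the multiplicities $b_\lambda$ are genuinely stable in $n$, so that the computation at $n=d$ actually describes $R^{\cyc}_{m,n}$ for all $n$. I would handle this via a polarization argument parallel to \Cref{lemma:polarization}: the $(d_1,\ldots,d_n)$-weight space of $R^{\cyc}_{m,n}$ should be isomorphic, via restitution, to the $\SG_{d_1}\times\cdots\times\SG_{d_n}$-invariant subspace of $\tilde{P}_{m,d}$, with $d=\sum d_i$. The main technical check is that polarization and its inverse preserve the cyclic-trace subspace, which follows from the observation that substituting $X_i\mapsto\sum_{j=1}^{d_i}X_i^{(j)}$ carries a single trace $\Tr(X_{i_1}\cdots X_{i_d})$ to a sum of single traces in the variables $X_i^{(j)}$, and conversely the multilinearization of such a trace is again a sum of single traces. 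Once this is checked, the matching of $\SG_{d_1}\times\cdots\times\SG_{d_n}$-invariants of $V_\lambda$ with the $(d_1,\ldots,d_n)$-weight space of $\bbS^\lambda(\CC^n)$ is a standard fact from Schur--Weyl duality, and the claim follows.
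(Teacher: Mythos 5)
Your proposal is correct and takes essentially the approach the paper intends: the paper's own proof is deferred (``the idea is to use polarization''), and your argument supplies exactly that --- the restriction of \Cref{lemma:polarization} to the cyclic-trace subspaces identifies $R^{\cyc}_{m,n;(d_1,\ldots,d_n)}$ with $(\tilde{P}_{m,d})^{\SG_{d_1}\times \cdots \times \SG_{d_n}}$, after which the standard identity $\dim V_\lambda^{\SG_{d_1}\times \cdots \times \SG_{d_n}} = \dim \bbS^{\lambda}(\CC^n)_{(d_1,\ldots,d_n)}$ matches the $GL_n$-character of $R^{\cyc}_{m,n;d}$ with $\sum_{\lambda} b_\lambda s_\lambda(t_1,\ldots,t_n)$. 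Your key technical point --- that substitution $X_i \mapsto \sum_j X_i^{(j)}$ and restitution both carry single traces to sums of single traces, so the isomorphism of \Cref{lemma:polarization} restricts to the cyclic parts --- is precisely the step that needs checking, and you verify it correctly.
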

\begin{proof}
	Will be done in the next version of these notes. The idea is to use polarization.
\end{proof}
	We have 
	\[
	\tilde{P}_{m,d} \cong \frac{\CC[\SG_{d}]_{\cyc}}{I_{m,d} \cap \CC[\SG_{d}]_{\cyc}} \cong P_{m,d-1}.
	\]
	In particular, for $m=2$, \Cref{thm:decompositionFor2x2} gives a decomposition of $\tilde{P}_{m,d}$ as a representation of the subgroup $\SG_{d-1} \subset \SG_{d}$. But this is not enough information to deduce the decomposition as an $\SG_{d}$-representation:
	\begin{problem}
	Write down a closed formula for the decomposition of the $\SG_d$-representation
	\[
	\tilde{P}_{2,d} = R^{\cyc}_{2,d;(1,\ldots,1)} = \frac{\CC[\SG_{d}]_{\cyc}}{I_{2,d} \cap \CC[\SG_{d}]_{\cyc}}.
	\] 
	\end{problem}
	Via \Cref{obs:uMPSLinearSpan} and \Cref{claim:SGandGLdecompositionsAgreeBis}, such a formula would give a formula for the dimension of $\langle \uMPS(2,n,d) \rangle$. In particular, it would solve Conjectures 3.7 and 3.8 in \cite{TS:uMPSlinear}.
	\section*{Acknowledgement}
	This work is partially supported by Research foundation -- Flanders (FWO) -- Grant Number 1219723N, and partially supported by the Thematic Research Programme ``Tensors: geometry, complexity and quantum entanglement", University of Warsaw, Excellence Initiative – Research University and the Simons Foundation Award No. 663281 granted to the Institute of Mathematics of the Polish Academy of Sciences for the years 2021-2023. 
	\bibliographystyle{alpha} 
	\bibliography{MPSInvThy}{}

\begin{thebibliography}{PGVWC07}

\bibitem[Ber82]{Berele1982}
Allan Berele.
\newblock Homogeneous polynomial identities.
\newblock {\em Israel J. Math.}, 42(3):258--272, 1982.

\bibitem[CM14]{critchMorton}
Andrew Critch and Jason Morton.
\newblock Algebraic geometry of matrix product states.
\newblock {\em SIGMA Symmetry Integrability Geom. Methods Appl.}, 10:Paper 095,
  10, 2014.

\bibitem[CMS23]{TS:uMPS}
Adam Czapliński, Mateusz Michałek, and Tim Seynnaeve.
\newblock Uniform matrix product states from an algebraic geometer's point of
  view.
\newblock {\em Advances in Applied Mathematics}, 142:102417, 2023.

\bibitem[DCP17]{DeConciniProcesi2017}
Corrado De~Concini and Claudio Procesi.
\newblock {\em The invariant theory of matrices}, volume~69 of {\em University
  Lecture Series}.
\newblock American Mathematical Society, Providence, RI, 2017.

\bibitem[DF04]{DrenskyFormanek2004}
Vesselin Drensky and Edward Formanek.
\newblock {\em Polynomial identity rings}.
\newblock Advanced Courses in Mathematics. CRM Barcelona. Birkh\"{a}user
  Verlag, Basel, 2004.

\bibitem[DLMS22]{TS:uMPSlinear}
Claudia De~Lazzari, Harshit~J Motwani, and Tim Seynnaeve.
\newblock The linear span of uniform matrix product states.
\newblock {\em SIGMA}, 18, 2022.
\newblock 099, 18 pages.

\bibitem[Dre81]{Drensky1981Representations}
Vesselin Drensky.
\newblock Representations of the symmetric group and varieties of linear
  algebras.
\newblock {\em Mat. Sb. (N.S.)}, 115(157)(1):98--115, 159, 1981.

\bibitem[Dre84]{Drensky1984Codimensions}
Vesselin Drensky.
\newblock Codimensions of {$T$}-ideals and {H}ilbert series of relatively free
  algebras.
\newblock {\em J. Algebra}, 91(1):1--17, 1984.

\bibitem[FH91]{FultonHarris}
William Fulton and Joe Harris.
\newblock {\em Representation theory}, volume 129 of {\em Graduate Texts in
  Mathematics}.
\newblock Springer-Verlag, New York, 1991.
\newblock A first course, Readings in Mathematics.

\bibitem[For84]{Formanek1984Invariants}
Edward Formanek.
\newblock Invariants and the ring of generic matrices.
\newblock {\em J. Algebra}, 89(1):178--223, 1984.

\bibitem[HMOV14]{haegeman2014geometry}
Jutho Haegeman, Micha{\"e}l Mari{\"e}n, Tobias~J Osborne, and Frank Verstraete.
\newblock Geometry of matrix product states: Metric, parallel transport, and
  curvature.
\newblock {\em Journal of Mathematical Physics}, 55(2):021902, 2014.

\bibitem[NV18]{NV}
Miguel Navascues and Tamas Vertesi.
\newblock Bond dimension witnesses and the structure of homogeneous matrix
  product states.
\newblock {\em Quantum}, 2:50, 2018.

\bibitem[PGVWC07]{PerezGarciaVerstraeteMPS}
David Perez-Garcia, Frank Verstraete, Michael~M Wolf, and J~Ignacio Cirac.
\newblock Matrix product state representations.
\newblock {\em Quantum Inf. Comput.}, 7(5--6):401--430, 2007.

\end{thebibliography}
\end{document}